\theoremstyle{plain}
\newtheorem{thm}{Theorem}[section]
\newtheorem{dd}{Definition}[section]
\newtheorem{lem}{Lemma}[section]
\theoremstyle{definition}
\newtheorem{rem}{Remark}[section]
\DeclareMathOperator{\e}{e}
\newcommand\blfootnote[1]{%
	\begingroup
	\renewcommand\thefootnote{}\footnote{#1}%
	\addtocounter{footnote}{-1}%
	\endgroup
}
\title{Controllability and observability of tempered fractional differential systems\blfootnote{This 
is a preprint version of the paper published open access in 
\emph{Commun. Nonlinear Sci. Numer. Simul.} at \url{https://doi.org/10.1016/j.cnsns.2024.108501}.}}
\author{Ilyasse Lamrani$^{1}$\\
\texttt{i.lamrani@edu.umi.ac.ma} 
\and
Hanaa Zitane$^{2,3}$\\
\texttt{h.zitane@uae.ac.ma}
\and 
Delfim F. M. Torres$^{3,}$\thanks{Corresponding author.}\\
\texttt{delfim@ua.pt}}
\date{$^{1}$TSI Team, Department of Mathematics, Faculty of Sciences,\\ 
Moulay Ismail University, B.P.~11201 Meknes, Morocco\\[0.3cm]
$^{2}$Department of Mathematics, Faculty of Sciences,\\
University of Abdelmalek Essaadi, B.P.~2121 Tetouan, Morocco\\[0.3cm] 
$^{3}$\text{Center for Research and Development in Mathematics and Applications (CIDMA),}  
Department of Mathematics, University of Aveiro, 3810-193 Aveiro, Portugal}
\begin{document}

\maketitle

\begin{abstract}
We study controllability and observability concepts of tempered fractional 
linear systems in the Caputo sense. First, we formulate a solution for the class 
of tempered systems under investigation by means of the Laplace transform method. 
Then, we derive necessary and sufficient conditions for the controllability, 
as well as for the observability, in terms of the Gramian controllability matrix 
and the Gramian observability matrix, respectively. Moreover, we establish the 
Kalman criteria that allows one to check easily the controllability 
and the observability for tempered fractional systems. Applications 
to the fractional Chua's circuit and Chua--Hartley's oscillator models 
are provided to illustrate the theoretical results developed in this manuscript.

\medskip

\noindent \textbf{Keywords:} 
Controllability and observability of tempered fractional systems; 
Gramian matrix and Kalman criteria; 
fractional nonlinear Chua's circuit; 
fractional nonlinear Chua--Hartley's oscillator. 
\medskip

\noindent \textbf{2020 Mathematics Subject Classification}: 26A33, 34A08, 34A30, 93B05, 93B07.
\end{abstract}


\section{Introduction}

Tempered fractional calculus extends the traditional framework of fractional calculus 
by introducing a tempering parameter that controls the decay rate of the memory 
kernel~\cite{Li,Meerschaert}. This extension allows for more flexible modeling 
of systems with long-range memory effects and decay, capturing a wider range of 
behaviors beyond those described by classical fractional operators, 
such as viscoelastic materials, anomalous diffusion, and systems 
where, after a long period, the influence of the past decays at a certain rate.
In recent years, tempered fractional calculus has become a subject 
of investigation due to its applications in stochastic and dynamical systems 
\cite{Meerschaert2,MR1303317,AE:5,Meerschaert3}. For example, it has been used 
to describe and understand turbulence in geophysical flows~\cite{Meerschaert2} 
and L\'{e}vy processes such as Brownian motion~\cite{Meerschaert2,Meerschaert3}. 
Additionally, a truncated L\'{e}vy flight has been investigated to capture the 
natural cutoff in real physical systems \cite{MR1303317}. Furthermore, stochastic 
tempered L\'{e}vy flights have led to significant advancements in mathematical 
research, including solving multi-dimensional partial differential equations 
using both analytical and numerical methods~\cite{Li2}.

In control theory, the controllability and observability concepts are fundamental to 
the study of dynamical systems. Controllability enables one to steer the system's 
state from an initial state to a desired state using control inputs, while 
observability inform us about the ability to reconstruct the system's internal 
state based on its outputs~\cite{curtain2012introduction}. 
Matingon and d'Andréa-Novel were the first to have examined 
the controllability and observability properties of linear fractional 
differential equations in finite dimensional spaces, either in state space form 
or in polynomial representation \cite{MA}. Then, several authors 
have been investigating various results about the controllability 
and observability of linear and nonlinear fractional differential equations, 
see for example~\cite{BGORT,BetDje,Matar,SASA,YJZ} and references cited therein. 
For instance, in \cite{BGORT}, the observability and controllability 
of a fractional time invariant linear system is investigated by means 
of the observability and controllability Grammian matrices that are obtained 
in terms of the Mittag-Leffler matrix function. 
Also, in \cite{YJZ}, necessary and sufficient conditions of controllability 
and observability of fractional continuous time linear systems with regular 
pencils are derived via the construction of Gramian matrices. 
In \cite{balachandran3}, Balachandran et al.\ studied the controllability 
of linear and nonlinear fractional damped dynamical systems involving fractional 
Caputo derivatives of varying orders in finite-dimensional spaces. Their approach 
utilized the Mittag-Leffler matrix function and an iterative technique. 
Furthermore, in \cite{Matar}, Matar 
presented some properties of controllability 
and observability of fractional linear systems using 
conformable fractional derivatives, where he proved that the controllability 
is equivalent to a controllability matrix that has a full rank 
and established a relationship between the controllability 
and the fractional differential Lyapunov equation.

For tempered fractional systems, we have only some results about the stability 
and stabilizability~\cite{Deng,Gassara,Gu,MyID:549}. For instance, in \cite{Deng}, 
the Mittag-Leffler stability of tempered dynamical systems is studied based on the
Lyapunov direct method and the fractional comparison principle. Moreover, 
in \cite{Gassara}, the generalized practical Mittag-Leffler stability 
of a class of tempered fractional nonlinear systems is investigated. In \cite{Gu}, 
the asymptotic and Mittag-Leffler stability of tempered fractional neural networks 
are analyzed by means of the Banach fixed point theorem, while in \cite{MyID:549}, 
a time delay dependent and independent criteria for the finite time 
stability of tempered fractional systems with delay are characterized. 

Motivated by the proceeding, in this work we propose to investigate 
the controllability and observability problems of tempered fractional 
linear systems, which have not yet been tackled. 
The main contributions of this paper can be summarized as follows:
\begin{itemize}
\item An analytical solution to tempered linear systems 
is obtained by means of the Laplace transform method
(see Theorem~\ref{existh}).
\item Necessary and sufficient conditions are established 
for the controllability and observability 
problems by means of Gramian matrices
(see Theorems~\ref{theo2} 
and \ref{Obs1}, respectively).
\item Kalman criteria are derived for the controllability 
and observability of a class of tempered linear systems
(see Theorems~\ref{thm:K:c} 
and \ref{theo4}, respectively).
\item The obtained controllability and observability results are applied 
to the tempered fractional Chua's circuit model 
(see Section~\ref{subsec:Chua:Circuit}) 
and Chua--Hartley's oscillator model
(see Section~\ref{subsec:C:H:oscil}).
\end{itemize}

The remainder of this article is organized as follows. In Section~\ref{Sec:2}, 
we provide the necessary preliminaries, including the definitions of tempered 
fractional derivative and integral in the Caputo sense. Afterwards, we
proceed with Section~\ref{Sec:3}, where we begin by proving an important 
result about the Laplace transform of two functions 
that depend on Mittag-Leffler functions of one and two parameters. Then, 
we establish the analytical solution for tempered fractional linear systems. 
Furthermore, we state our main controllability and observability results. 
Applications that illustrate the obtained results are given 
in Section~\ref{Sec:4}. We end up with Section~\ref{Sec:5} of conclusion
and some possible directions of future research.


\section{Preliminaries}
\label{Sec:2}

In this section, we state some basic definitions and preliminary results 
about tempered fractional operators, which are used in the rest of the paper.

\begin{dd}[See \cite{Li,Meerschaert,Sabatier}]
Let $\alpha>0$, $\rho>0$, and $v$ be an absolutely integrable 
function defined on $[a, b]$, $a, b \in \mathbb{R}$, 
and $a<b$ (if $b=\infty$, then the interval is half-open). 
The tempered fractional integral of function $v$ is defined as follows:
\begin{equation}
\label{TFI}
{ }^T I_a^{\alpha, \rho} v(t)=\frac{1}{\Gamma(\alpha)} 
\int_a^t e^{-\rho(t-s)}(t-s)^{\alpha-1} v(s) \mathrm{d} s,    
\end{equation}
where $\Gamma(\cdot)$ represents the Euler gamma function defined by
$$
\Gamma(z)=\int_0^{\infty} e^{-s} s^{z-1} 
\mathrm{~d} s, \quad z \in \mathbb{C}.
$$
\end{dd}

\begin{dd}[See \cite{Li,Meerschaert}]
\label{TCFD}
Let $\alpha \in(0,1)$ and $\rho>0$. The Caputo tempered fractional 
order derivative of a function $v \in C^1([0, b], \mathbb{R})$ is given by 
\begin{equation}
\label{TFD}
{ }^T D_a^{\alpha, \rho} v(t)=\frac{1}{\Gamma(1-\alpha)} 
\int_a^t e^{-\rho(t-s)}(t-s)^{-\alpha} D^{1, \rho} v(s) \mathrm{d} s    
\end{equation}
with $D^{1, \rho} v(t)=\rho v(t)+d v^{\prime}(t)$, where $d=1$ 
and its dimension equals the dimension of the independent variable $t$.
\end{dd}

\begin{rem}
To ensure the dimensional homogeneity within the combination $\rho v(t)+ dv'(t)$, 
the constant $d = 1$ with a dimension equal to the dimension of the independent 
variable $t$ is added~\cite{MyID:549}.
\end{rem}

\begin{rem}
If we let $\rho=0$ in Definition~\ref{TCFD}, then we retrieve the 
classical fractional derivative in Caputo sense \cite{mittag}.	
\end{rem}

\begin{dd}[See \cite{Mittaglefller}]\label{Mittag}
The Mittag-Leffler functions of a matrix $A$,
of one and two parameters, are defined as
\begin{equation}
\label{eq1}
E_{\alpha}(A)=\sum_{l=0}^{+\infty}\frac{A^{l}}{\Gamma(\alpha l+1)},
\quad Re(\alpha)>0,
\end{equation}
and 
\begin{equation}
\label{eq2}
E_{\alpha,\alpha'}(A)=\sum_{l=0}^{+\infty}
\frac{A^{l}}{\Gamma(\alpha l+ \alpha')},
\quad Re(\alpha)>0,~\alpha'>0,
\end{equation}	
respectively. 
\end{dd}

\begin{lem}[See \cite{Schiff}]
The Laplace transform of a function $\varphi(t)$ 
of a real variable $t \in \mathbb{R}^{+}$ is defined by
$$
(\mathcal{L} \varphi)(s)=\mathcal{L}[\varphi(t)](s)
=\bar{\varphi}(s):=\int_0^{\infty} 
\e^{-s t} \varphi(t) d t, \quad 
s \in \mathbb{C}.
$$
\end{lem}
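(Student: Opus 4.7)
The stated result is, as written, the definition of the Laplace transform rather than a proposition requiring demonstration: the right-hand side is introduced via the assignment symbol ":=" as the meaning of $(\mathcal{L}\varphi)(s)$, and the attribution to \cite{Schiff} signals that the author is merely recalling a textbook convention. So strictly speaking no proof plan is called for, and I would not write one; I would instead flag that the environment should probably be \texttt{dd} (definition) rather than \texttt{lem}.

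If one nevertheless insists on extracting a genuine claim, the only natural one is that the defining integral actually converges for a suitable class of $\varphi$. My plan would be the standard three-line argument: first, fix the admissible class, namely piecewise continuous $\varphi\colon [0,\infty)\to\mathbb{R}$ of exponential order, so that $|\varphi(t)|\le M\,\e^{\sigma_0 t}$ for some $M,\sigma_0\ge 0$; second, for $s\in\mathbb{C}$ with $\operatorname{Re}(s)>\sigma_0$, bound the integrand by $|\e^{-st}\varphi(t)|\le M\,\e^{-(\operatorname{Re}(s)-\sigma_0)t}$; third, conclude absolute convergence by comparison with the geometric tail $\int_0^\infty \e^{-(\operatorname{Re}(s)-\sigma_0)t}\,dt=(\operatorname{Re}(s)-\sigma_0)^{-1}$, which also shows $(\mathcal{L}\varphi)(s)$ is holomorphic on that half-plane.

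The only mild obstacle in this reformulation is picking the admissible class of $\varphi$ in a way that is automatically compatible with the functions appearing later in the paper, in particular the Mittag-Leffler-type integrands $t^{\alpha'-1}\e^{-\rho t}E_{\alpha,\alpha'}(At^{\alpha})$ that will arise when solving the tempered linear system. Since these functions are of exponential order once $\rho$ is accounted for, the class chosen above suffices and no genuine difficulty remains; as presented, however, the statement carries neither hypothesis nor conclusion, so this question does not truly arise.
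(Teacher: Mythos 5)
Your reading is correct: the paper gives no proof of this statement, citing it as a recalled textbook definition from the literature, so your decision to treat it as a definition (and your optional standard convergence argument for functions of exponential order) is entirely consistent with what the paper does. Nothing further is needed.
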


\begin{lem}[See \cite{Li}]
\label{transf:laplace:tempered}
The Laplace transform of the tempered fractional integral 
\eqref{TFI} and the Caputo derivative \eqref{TFD} are given as
\begin{enumerate}
\item $\mathcal{L}\left[{ }^T I_0^{\alpha, \rho} v(t)\right](s)
=(\rho+s)^{-\alpha} \mathcal{L}[v(t)](s)$;
\item  $\mathcal{L}\left[{ }^T D_0^{\alpha, \rho} v(t)\right](s)
=(s+\rho)^\alpha \mathcal{L}[v(t)](s)-(s+\rho)^{\alpha-1}v(0)$.
\end{enumerate}
\end{lem}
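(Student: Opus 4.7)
The plan is to exploit the convolution structure of both tempered operators together with the first shift theorem for the Laplace transform. Introduce the kernel $k_\alpha(t) := e^{-\rho t} t^{\alpha-1}/\Gamma(\alpha)$, so that the definition \eqref{TFI} rewrites as ${}^T I_0^{\alpha,\rho} v = k_\alpha * v$, a plain convolution on $[0,\infty)$.

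For part 1, I would apply the convolution theorem to obtain $\mathcal{L}[{}^T I_0^{\alpha,\rho} v](s) = \mathcal{L}[k_\alpha](s)\,\mathcal{L}[v](s)$, and then combine the standard Gamma-type identity $\mathcal{L}[t^{\alpha-1}/\Gamma(\alpha)](s) = s^{-\alpha}$ with the first shift theorem $\mathcal{L}[e^{-\rho t} f(t)](s) = \bar f(s+\rho)$ to read off $\mathcal{L}[k_\alpha](s) = (s+\rho)^{-\alpha}$. Assembling these two factors gives the claimed formula in item 1.

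For part 2, I would first observe that the Caputo tempered derivative \eqref{TFD} is precisely a tempered integral of order $1-\alpha$ applied to the auxiliary operator $D^{1,\rho}$: namely, ${}^T D_0^{\alpha,\rho} v = {}^T I_0^{1-\alpha,\rho}(D^{1,\rho} v)$. Applying item~1 already established, this gives $\mathcal{L}[{}^T D_0^{\alpha,\rho} v](s) = (s+\rho)^{-(1-\alpha)} \mathcal{L}[D^{1,\rho} v](s)$. Next, by linearity of $\mathcal{L}$ and the classical identity $\mathcal{L}[v'](s) = s\bar v(s) - v(0)$, which is legitimate because $v \in C^1([0,b],\mathbb{R})$, I compute $\mathcal{L}[D^{1,\rho} v](s) = \rho\bar v(s) + s\bar v(s) - v(0) = (s+\rho)\bar v(s) - v(0)$. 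Multiplying the two factors and distributing the power $(s+\rho)^{\alpha-1}$ produces the announced expression $(s+\rho)^\alpha \bar v(s) - (s+\rho)^{\alpha-1} v(0)$.

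The argument is essentially routine, so there is no genuine obstacle; the points to be careful about are the sign convention in the shift (the kernel carries $e^{-\rho t}$ with $\rho>0$, so the Laplace variable shifts as $s \mapsto s+\rho$) and the $C^1$ regularity of $v$, which is what legitimises the formula for $\mathcal{L}[v']$ and also ensures that $D^{1,\rho}v$ is integrable on $[0,b]$ so that its Laplace transform is well-defined.
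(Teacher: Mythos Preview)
Your argument is correct and complete: the convolution representation of ${}^T I_0^{\alpha,\rho}$ together with the shift rule $\mathcal{L}[e^{-\rho t}f(t)](s)=\bar f(s+\rho)$ immediately yields item~1, and the identification ${}^T D_0^{\alpha,\rho} v = {}^T I_0^{1-\alpha,\rho}(D^{1,\rho}v)$ combined with the classical formula $\mathcal{L}[v'](s)=s\bar v(s)-v(0)$ gives item~2 in two lines.

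There is nothing to compare against: the paper does not supply a proof of this lemma but simply cites it from reference~\cite{Li}. Your write-up is therefore a self-contained justification of a result the authors take as known, and the approach you chose is exactly the natural one (and, in fact, the one used in the cited source).
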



\section{Main results}
\label{Sec:3}

In this paper, we are interested in studying the analytical solution, 
the controllability, as well as the observability, 
of the following linear tempered fractional system:
\begin{equation}
\label{tfs1}
\left\{
\begin{array}{ll}
{ }^T D_0^{\alpha, \rho} y(t)=Ay(t)+Bu(t), 
& t\in [0,T],\\
y(0)=y_{0},  & y_{0} \in \mathbb{R}^{n},
\end{array}
\right.
\end{equation}
where $y:[0, T] \rightarrow \mathbb{R}^n$ is the state, 
${ }^T D_0^{\alpha, \rho} y(t)$ is the Caputo tempered 
fractional order derivative of $y$, $y_{0}$ is the initial state, 
$A \in \mathbb{R}^{n \times n}$ is the state matrix, 
$B \in \mathbb{R}^{n \times m}$ is the control matrix, 
and $u:[0, T] \rightarrow \mathbb{R}^m$ is the input control function.

We first prove the following fundamental 
lemma that enables us to establish Theorem~\ref{existh}. 

\begin{lem}\label{nos}
Let $\alpha\in (0,1)$. Then, 
\begin{enumerate}
\item $\mathcal{L}\left[\e^{-\rho t}E_\alpha\left( At^\alpha\right)\right](s)
=(s+\rho)^{\alpha-1}\left((s+\rho)^\alpha I-A\right)^{-1}$;
\item $\mathcal{L}  \left[\e^{-\rho t}t^{\alpha-1} 
E_{\alpha, \alpha}\left(At^\alpha\right)\right](s)
=\left[(s+\rho)^\alpha I -A\right]^{-1}$.
\end{enumerate}
\end{lem}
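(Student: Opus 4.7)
The plan is to compute both transforms directly from the series expansions (\ref{eq1}) and (\ref{eq2}) of the Mittag--Leffler functions, combining termwise Laplace transforms of monomials $t^{\beta}$ with the standard frequency-shift rule $\mathcal{L}[\e^{-\rho t}f(t)](s)=\bar f(s+\rho)$. Both identities then reduce to recognising a geometric series in the matrix $A/s^{\alpha}$.

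For part~(1), I would substitute $E_\alpha(At^\alpha)=\sum_{l\ge 0}A^{l}t^{\alpha l}/\Gamma(\alpha l+1)$ and, provided interchange of sum and integral is justified, use $\mathcal{L}[t^{\alpha l}](s)=\Gamma(\alpha l+1)/s^{\alpha l+1}$. The Gamma factors cancel, leaving
\[
\mathcal{L}\!\left[E_\alpha(At^\alpha)\right](s)=\frac{1}{s}\sum_{l=0}^{\infty}\left(\frac{A}{s^\alpha}\right)^{l}=\frac{1}{s}\left(I-\frac{A}{s^\alpha}\right)^{-1}=s^{\alpha-1}\bigl(s^\alpha I-A\bigr)^{-1}.
\]
Applying the shift rule $\mathcal{L}[\e^{-\rho t}(\cdot)](s)=(\cdot)(s+\rho)$ immediately yields the stated expression $(s+\rho)^{\alpha-1}\bigl((s+\rho)^\alpha I-A\bigr)^{-1}$.

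For part~(2), the exponent of $t$ inside the sum is $\alpha l+\alpha-1$, and $\mathcal{L}[t^{\alpha l+\alpha-1}](s)=\Gamma(\alpha l+\alpha)/s^{\alpha l+\alpha}$ cancels precisely against the denominator of $E_{\alpha,\alpha}$, giving
\[
\mathcal{L}\!\left[t^{\alpha-1}E_{\alpha,\alpha}(At^\alpha)\right](s)=\sum_{l=0}^{\infty}\frac{A^{l}}{s^{\alpha l+\alpha}}=s^{-\alpha}\left(I-\frac{A}{s^\alpha}\right)^{-1}=\bigl(s^\alpha I-A\bigr)^{-1},
\]
and one concludes again by the shift rule.

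The main technical obstacle is the rigorous justification of the termwise Laplace transform and of the geometric-series manipulation: the identity $\sum_{l\ge 0}(A/s^\alpha)^{l}=(I-A/s^\alpha)^{-1}$ requires the matrix norm condition $\|A\|<|s|^\alpha$, equivalently $s$ in a neighbourhood of infinity in the right half-plane. Once the formulas are established there, both sides are rational-type analytic functions of $s$ on the set where $(s+\rho)^\alpha I-A$ is invertible, so uniqueness of the Laplace transform (analytic continuation) extends the identities to the full domain of validity. Absolute convergence of the Mittag--Leffler series for every fixed $t$ together with Fubini on the product measure $\mathrm{d}t\otimes$ counting measure legitimises the termwise integration step.
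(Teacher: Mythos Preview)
Your proposal is correct and follows essentially the same route as the paper: expand the Mittag--Leffler series, take termwise Laplace transforms of the monomials so that the Gamma factors cancel, sum the resulting matrix geometric series under the condition $\|A\|<|s|^{\alpha}$, and combine with the frequency-shift rule. The only cosmetic difference is ordering---the paper applies the shift first and carries $s+\rho$ through the computation, whereas you compute at $s$ and shift at the end---and your remarks on Fubini and analytic continuation are in fact more careful than the paper's ``without loss of generality'' assumption $s\ge\|A\|^{1/\alpha}$.
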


\begin{proof}
Using the fact that 
$\displaystyle\mathcal{L}\left[\e^{ct}f(t)\right](s)
=\mathcal{L}[f(t)](s-c)$, 
we have
\begin{equation}
\label{tal12l}	
\mathcal{L}\left[E_\alpha\left( At^\alpha\right)\right]
\left(s+\rho\right)  =\mathcal{L}\left[\e^{-\rho t} 
E_\alpha\left( At^\alpha\right)\right](s)
\end{equation}
and
\begin{equation}
\label{bn101010}	
\mathcal{L}  \left[t^{\alpha-1} E_{\alpha, \alpha}
\left(At^\alpha\right)\right](s+\rho)
=\mathcal{L}  \left[\e^{-\rho t}t^{\alpha-1} 
E_{\alpha, \alpha}\left(At^\alpha\right)\right](s).
\end{equation}
On the other hand, from Definition~\ref{Mittag}, one has
\begin{align*}
\mathcal{L}\left[E_\alpha (A t^\alpha)\right](s+\rho)
&=\mathcal{L}\left[\sum_{k=0}^{\infty}
\frac{(At^{\alpha})^{k}}{\Gamma(k\alpha+1)}\right](s+\rho)\\
&=\sum_{k=0}^{\infty}\frac{A^k}{\Gamma(k\alpha+1)}
\mathcal{L}[t^{\alpha k}](s+\rho)\\
&= \sum_{k=0}^{\infty}\frac{A^k}{\Gamma(k\alpha+1)}
\frac{\Gamma(\alpha k+1)}{(s+\rho)^{\alpha k+1}}.
\end{align*}
Using $\mathcal{L}[t^{\alpha k}](s+\rho)
=\dfrac{\Gamma(\alpha+1)}{(s+\rho)^{\alpha+1}}$ yields
\begin{equation}
\label{12willwr}
\mathcal{L}\left[E_\alpha (A t^\alpha)\right](s+\rho)
= \dfrac{1}{(s+\rho)}\sum_{k=0}^{\infty}
\left(\dfrac{A}{(s+\rho)^\alpha}\right)^{k}.
\end{equation}
Without loss of generality, let us assume that 
$s\geq \|A\|^{\frac{1}{\alpha}}$. 
Then, $\|A\|^{\frac{1}{\alpha}}\leq  (s+\rho)$ and
\begin{align}
\label{3m6}	
\left\|\frac{A}{(s+\rho)^{\alpha}}\right\|<1. 
\end{align}
Applying \eqref{3m6} to \eqref{12willwr}, one gets that
\begin{equation}
\label{e4e4hml}	
\mathcal{L}\left[E_\alpha (A t^\alpha)\right](s+\rho)
=(s+\rho)^{\alpha-1}\left( (s+\rho)^\alpha I-A\right)^{-1}.
\end{equation}
Hence, by combining \eqref{tal12l} and \eqref{e4e4hml}, we obtain that
\begin{equation*}
\mathcal{L}\left[\e^{-\rho t} 
E_\alpha\left( At^\alpha\right)\right](s)
=(s+\rho)^{\alpha-1}\left( (s+\rho)^\alpha I-A\right)^{-1}.
\end{equation*}
This proves the first statement of the lemma. 
For the second statement, we have
\begin{align*}
\mathcal{L}  \left[t^{\alpha-1} 
E_{\alpha, \alpha}\left(At^\alpha\right)\right](s+\rho)
&=\mathcal{L}\left[t^{\alpha-1}\sum_{k=0}^{\infty}
\frac{(At^\alpha)^{k}}{\Gamma(\alpha k+\alpha)}\right](s+\rho)\\
&=\mathcal{L}\left[\sum_{k=0}^{\infty}\frac{t^{\alpha k
+\alpha-1}A^{k}}{\Gamma(\alpha k+\alpha)}\right](s+\rho)\\
&= \sum_{k=0}^{\infty}\frac{A^k}{\Gamma(\alpha k+\alpha)}
\mathcal{L}[t^{\alpha k+\alpha-1}](s+\rho)\\
&= \sum_{k=0}^{\infty}\frac{A^k}{\Gamma(\alpha k+\alpha)}
\frac{\Gamma(\alpha k+\alpha)}{(s+\rho)^{\alpha k+\alpha}}.
\end{align*}
Therefore,
\begin{equation}
\label{Ham1818l}
\mathcal{L}  \left[t^{\alpha-1} E_{\alpha, \alpha}
\left(At^\alpha\right)\right](s+\rho)=\left((s+\rho)^{\alpha}I-A\right)^{-1}.
\end{equation}
By combining \eqref{bn101010} and \eqref{Ham1818l}, it follows that
\begin{equation*}
\mathcal{L}  \left[\e^{-\rho t}t^{\alpha-1} E_{\alpha, \alpha}
\left(At^\alpha\right)\right](s)=\left((s+\rho)^{\alpha}I-A\right)^{-1}.
\end{equation*}
The proof is complete.
\end{proof}

The following theorem characterizes 
the well-posedness of system \eqref{tfs1}.

\begin{thm}
\label{existh}
Let $\rho > 0$ and $\alpha \in (0,1)$. Consider $u(t) \in \mathbb{R}^m$, 
$A \in \mathbb{R}^{n \times n}$, and $B \in \mathbb{R}^{n \times m}$, 
with the property that $\operatorname{det}[(s+\rho)^{\alpha} I - A] \neq 0$ 
for $s \geq \|A\|^{\frac{1}{\alpha}}$. Then, the solution of the linear 
tempered fractional system \eqref{tfs1} is given by
\begin{equation}
y(t) = \e^{-\rho t} 
E_\alpha\left( At^\alpha\right)y_0 
+ \int_{0}^{t} \e^{-\rho (t-\tau)}(t-\tau)^{\alpha-1} 
E_{\alpha,\alpha}\left(A(t-\tau)^{\alpha}\right)Bu(\tau)d\tau.
\end{equation}
\end{thm}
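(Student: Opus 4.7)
The plan is to solve \eqref{tfs1} by applying the Laplace transform to both sides of the tempered fractional differential equation and then inverting the resulting algebraic expression using Lemma~\ref{nos} together with the convolution theorem.

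First, I would apply the Laplace transform to the equation ${ }^T D_0^{\alpha, \rho} y(t) = Ay(t) + Bu(t)$. By Lemma~\ref{transf:laplace:tempered}(2) applied componentwise, the left-hand side becomes $(s+\rho)^{\alpha}\bar{y}(s) - (s+\rho)^{\alpha-1} y_0$, while the right-hand side is simply $A\bar{y}(s) + B\bar{u}(s)$. Collecting the terms in $\bar{y}(s)$ gives
\begin{equation*}
\left[(s+\rho)^{\alpha} I - A\right] \bar{y}(s) = (s+\rho)^{\alpha-1} y_0 + B\bar{u}(s).
\end{equation*}
The hypothesis $\det[(s+\rho)^{\alpha} I - A] \neq 0$ for $s \geq \|A\|^{1/\alpha}$ guarantees that this matrix is invertible on the relevant half-line, so I can solve
\begin{equation*}
\bar{y}(s) = \left[(s+\rho)^{\alpha} I - A\right]^{-1} (s+\rho)^{\alpha-1} y_0 + \left[(s+\rho)^{\alpha} I - A\right]^{-1} B \bar{u}(s).
\end{equation*}

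Next, I would invert term by term. For the first term, Lemma~\ref{nos}(1) identifies $(s+\rho)^{\alpha-1}[(s+\rho)^{\alpha} I - A]^{-1}$ as the Laplace transform of $\e^{-\rho t} E_\alpha(A t^\alpha)$, yielding the contribution $\e^{-\rho t} E_\alpha(A t^\alpha) y_0$. For the second term, Lemma~\ref{nos}(2) identifies $[(s+\rho)^{\alpha} I - A]^{-1}$ as the Laplace transform of $\e^{-\rho t} t^{\alpha-1} E_{\alpha,\alpha}(A t^\alpha)$. Since the second term is a product of two Laplace transforms, the convolution theorem yields
\begin{equation*}
\int_0^t \e^{-\rho(t-\tau)} (t-\tau)^{\alpha-1} E_{\alpha,\alpha}\bigl(A(t-\tau)^\alpha\bigr) B u(\tau)\, d\tau,
\end{equation*}
and summing the two contributions delivers the claimed closed-form solution.

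The only genuinely delicate point is the justification of the inversion $[(s+\rho)^{\alpha} I - A]^{-1}$ at the level of Laplace transforms. This is precisely what the spectral condition in the hypothesis ensures, and the estimate \eqref{3m6} already used in the proof of Lemma~\ref{nos} shows that for $s \geq \|A\|^{1/\alpha}$ the Neumann series for the inverse converges, so all series manipulations performed when interpreting the Mittag--Leffler functions as Laplace transforms are legitimate. Beyond this, the argument is a routine chain of identifications and an application of the convolution theorem, and no further technical obstacle arises.
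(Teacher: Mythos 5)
Your proposal is correct and follows essentially the same route as the paper: apply the Laplace transform, use Lemma~\ref{transf:laplace:tempered} to reduce the equation to $[(s+\rho)^{\alpha}I-A]\bar{y}(s)=(s+\rho)^{\alpha-1}y_0+B\bar{u}(s)$, invert under the determinant hypothesis, and identify the two terms via Lemma~\ref{nos} and the convolution theorem. No substantive difference from the paper's argument.
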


\begin{proof}
Applying  the Laplace transform to both sides 
of the first equation of system \eqref{tfs1}, 
we obtain that
\begin{equation}
\label{eqa}
\mathcal{L}\left[{ }^T D_0^{\alpha, \rho} y(t)\right](s)
=\mathcal{L}\left[Ay(t)+Bu(t)\right](s).
\end{equation}
Using the linearity of the Laplace transform, 
equation~\eqref{eqa} implies that
\begin{equation}
\label{perml}
\begin{split}
\mathcal{L}\left[{ }^T D_0^{\alpha, \rho} y(t)\right](s)
&=\mathcal{L}\left[Ay(t)\right](s)+\mathcal{L}\left[Bu(t)\right](s)\\
&=A\mathcal{L}[y(t)](s)+B\mathcal{L}[u(t)](s).
\end{split}
\end{equation}
Equation \eqref{perml} is obtained due 
to the definition of $\mathcal{L}$. Specifically, $$
\mathcal{L}\left[Ay(t)\right](s)=\displaystyle\int_{0}^{\infty}
\e^{-st}Ay(t)dt=A\displaystyle\int_{0}^{\infty}\e^{-st}y(t)dt
$$
and 
$$
\mathcal{L}\left[Ay(t)\right](s)=\displaystyle\int_{0}^{\infty}
\e^{-st}By(t)dt=B\displaystyle\int_{0}^{\infty}\e^{-st}y(t)dt.
$$
Applying Lemma~\ref{transf:laplace:tempered}, 
we obtain from equation  \eqref{perml} that
\begin{equation*}
(s+\rho)^\alpha \mathcal{L}[y(t)](s)-(s+\rho)^{\alpha-1}y_0
=A\mathcal{L}[y(t)](s)+B\mathcal{L}[u(t)](s).
\end{equation*}
Thus,
\begin{equation*}
\left((s+\rho)^\alpha I-A\right)\mathcal{L}[y(t)](s)
= (s+\rho)^{\alpha-1}y_0+B\mathcal{L}[u(t)](s).
\end{equation*}
Since $\left((s+\rho)^\alpha I-A\right)$ is invertible, we have
\begin{equation}
\label{ygalm}	
\mathcal{L}[y(t)](s)=\left((s+\rho)^\alpha 
I-A\right)^{-1}(s+\rho)^{\alpha-1}y_0
+\left((s+\rho)^\alpha I-A\right)^{-1}B\mathcal{L}[u(t)](s).
\end{equation}
By virtue of Lemma~\ref{nos}, it follows that
\begin{align*}
\mathcal{L}[y(t)](s)
&=\mathcal{L}\left[\e^{-\rho t} 
E_\alpha\left( At^\alpha\right)\right](s)y_0+\mathcal{L}  
\left[\e^{-\rho t}t^{\alpha-1} E_{\alpha, \alpha}
\left(At^\alpha\right)\right](s)B\mathcal{L}[u(t)](s)\\
&=\mathcal{L}\left[\e^{-\rho t} E_\alpha\left( 
At^\alpha\right)\right](s)y_0+\mathcal{L}  \left[
\e^{-\rho t}t^{\alpha-1} E_{\alpha, \alpha}
\left(At^\alpha\right)\ast Bu(t)\right](s),
\end{align*}
where $\ast$ denotes the convolution integral. 
Taking the inverse Laplace transform of the above equation, 
we obtain that 
\begin{equation*}
y(t)=\e^{-\rho t} E_\alpha\left( At^\alpha\right)y_0
+\e^{-\rho t}t^{\alpha-1} 
\ast E_{\alpha, \alpha}\left(At^\alpha\right)Bu(t).
\end{equation*}
Then, from the definition of convolution, we get
\begin{equation*}
y(t)=\e^{-\rho t} E_\alpha\left( At^\alpha\right)y_0
+\int_{0}^{t}\e^{-\rho (t-\tau)}(t-\tau)^{\alpha-1} 
E_{\alpha,\alpha}\left(A(t-\tau)^{\alpha}\right)Bu(\tau)d\tau,
\end{equation*}
which proves the intended result.
\end{proof}


\subsection{Controllability analysis}

In this part, we present some results concerning the controllability 
of system \eqref{tfs1}. Recalling Theorem~\ref{existh}, the solution 
of system \eqref{tfs1} satisfies the following 
variation of constants formula:
\begin{equation}
\label{sol25fps}	
y(t) = \e^{-\rho t} E_\alpha\left( At^\alpha\right)y_0 
+ \int_{0}^{t} \e^{-\rho (t-\tau)}(t-\tau)^{\alpha-1} 
E_{\alpha,\alpha}\left(A(t-\tau)^{\alpha}\right)Bu(\tau)d\tau.
\end{equation}

\begin{dd}
\label{def:cont:orig}
System \eqref{tfs1} is said to be state controllable on $\left[0, T\right]$, 
where $T>0$, if there exists an input signal $u(\cdot): \left[0, T\right] 
\rightarrow \mathbb{R}^m$ such that the associated solution of 
\eqref{tfs1} satisfies $y\left(T\right)=0$.
\end{dd}

Definition~\ref{def:cont:orig} 
concerns controllability at the origin, 
which is equivalent to controllability at any other state 
due to the linearity of the system.

\begin{dd}
\label{Defcon2} 
System \eqref{tfs1} is controllable if for some $T>0$, the controllability 
map $L_{T}:L^{2}\left([0,T]; \mathbb{R}^m\right)\longrightarrow \mathbb{R}^n$ 
has $\mathbb{R}^n$ as range, where
\begin{equation*}
L_{T}u:=\int_{0}^{T}\e^{-\rho (T-\tau)}(T-\tau)^{\alpha-1} 
E_{\alpha,\alpha}\left(A(T-\tau)^{\alpha}\right)Bu(\tau)d\tau.
\end{equation*}
\end{dd}

\begin{rem}
Definition~\ref{Defcon2} is an extension of the alternative 
controllability definition for 
integer order differential linear systems, 
as given in~\cite{curtain2012introduction}.
\end{rem}

A criterion that can be used to construct an open-loop 
control signal consists of introducing the controllability 
Gramian matrix given by
\begin{equation}
\label{GramCon}
W_c\left[0, T\right]:= L_{T}L^{*}_{T} 
=  \int_0^{T} \e^{-2\rho (T-\tau)}(T-\tau)^{2\alpha-2}  
E_{\alpha, \alpha}\left( (T-\tau)^\alpha A\right) B
B^* E_{\alpha, \alpha}\left( (T-\tau)^\alpha A^*\right) 
\mathrm{d}\tau,
\end{equation}
where we denote the matrix transpose by $^*$.

\begin{thm}
\label{theo2}
Under the hypotheses of Theorem~\ref{existh}, system \eqref{tfs1} 
is controllable on $\left[0, T\right]$ if, and only if, 
$W_c\left[0, T\right]$ is non-singular for all $T>0$.
\end{thm}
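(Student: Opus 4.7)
The plan is to establish the two directions of the equivalence via the standard Hilbert-space identity relating the range of a bounded linear map $L_T$ to the range of $L_T L_T^{*}$, together with an explicit open-loop control for the sufficient direction. I would work with Definition~\ref{Defcon2}, which by the remark that follows it and the linearity of \eqref{tfs1} is equivalent to steering any $y_0$ to the origin at time $T$ in the sense of Definition~\ref{def:cont:orig}.

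For sufficiency, I assume that $W_c[0,T]$ is non-singular and propose the explicit control
\[
u(\tau) := -B^{*} (T-\tau)^{\alpha-1} \e^{-\rho(T-\tau)} E_{\alpha,\alpha}\!\left(A^{*}(T-\tau)^{\alpha}\right) W_c^{-1}[0,T]\, \e^{-\rho T} E_{\alpha}(AT^{\alpha}) y_{0} .
\]
Substituting this $u$ into the variation-of-constants formula \eqref{sol25fps} at $t=T$, I would pull the constant vector $W_c^{-1}[0,T]\, \e^{-\rho T} E_{\alpha}(AT^{\alpha}) y_{0}$ outside the integral and recognize the remaining integrand as exactly the kernel defining \eqref{GramCon}. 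The forced term then collapses to $-\e^{-\rho T} E_{\alpha}(AT^{\alpha}) y_{0}$, canceling the free response and yielding $y(T)=0$.

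For necessity, I would invoke the classical identity $\mathrm{Range}(L_T L_T^{*}) = \mathrm{Range}(L_T)$, which holds in every Hilbert space and follows from $\mathrm{Ker}(L_T L_T^{*}) = \mathrm{Ker}(L_T^{*})$ combined with the orthogonal decomposition $\mathbb{R}^{n} = \mathrm{Range}(L_T L_T^{*}) \oplus \mathrm{Ker}(L_T L_T^{*})$. Controllability in the sense of Definition~\ref{Defcon2} gives $\mathrm{Range}(L_T)=\mathbb{R}^{n}$, whence $\mathrm{Range}(W_c[0,T])=\mathbb{R}^{n}$ and $W_c[0,T]$ is invertible. Equivalently, by contraposition: if $W_c[0,T] v = 0$ for some $v \neq 0$, then $v^{*} W_c[0,T] v = 0$ forces $B^{*} E_{\alpha,\alpha}(A^{*}(T-\tau)^{\alpha}) v = 0$ for almost every $\tau \in [0,T]$, so $v \perp \mathrm{Range}(L_T)$, contradicting surjectivity.

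The hard part is not conceptual but bookkeeping: in the sufficiency step, matching the integral produced by the candidate $u$ to $W_c[0,T]$ as written in \eqref{GramCon} requires using $E_{\alpha,\alpha}(A^{*}(T-\tau)^{\alpha}) = E_{\alpha,\alpha}(A(T-\tau)^{\alpha})^{*}$ (immediate from the series definition of the Mittag-Leffler function) and carefully combining the factors $\e^{-\rho(T-\tau)}(T-\tau)^{\alpha-1}$ appearing in \eqref{sol25fps} and in $u(\tau)$ into the weight $\e^{-2\rho(T-\tau)}(T-\tau)^{2\alpha-2}$ occurring in \eqref{GramCon}.
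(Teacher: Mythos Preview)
Your sufficiency argument, including the explicit steering control, coincides with the paper's proof. For necessity the paper also proceeds by contradiction from a nonzero $x$ with $x^{*}W_c[0,T]x=0$ and deduces $x^{*}E_{\alpha,\alpha}\bigl((T-\tau)^{\alpha}A\bigr)B=0$ on $[0,T]$, but rather than invoking the Hilbert-space identity $\mathrm{Range}(L_T)=\mathrm{Range}(L_TL_T^{*})$ it works through Definition~\ref{def:cont:orig}: it sets $y_0=E_{\alpha}^{-1}(T^{\alpha}A)x$, applies controllability to the origin to obtain a steering control, and then takes the inner product of the equation $y(T)=0$ with $x$ to force $\e^{-\rho T}\|x\|^{2}=0$. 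Your route via Definition~\ref{Defcon2} and the orthogonality $v\perp\mathrm{Range}(L_T)$ is equivalent but slightly cleaner, since it avoids the paper's tacit reliance on the invertibility of $E_{\alpha}(T^{\alpha}A)$ needed to define that particular $y_0$.
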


\begin{proof}
Let $T>0$ and suppose that $W_c\left[0, T\right]$ is 
non-singular. We show that $y(T)=0$. 
Since $W_c\left[0, T\right]$ is non-singular, 
$W_c^{-1}\left[0, T\right]$ is well-defined.
On one hand, replacing $t$ by $T$ in formula \eqref{sol25fps}, we obtain that
\begin{equation}
\label{nstT}	
y(T)=\e^{-\rho T} E_\alpha\left( AT^\alpha\right)y_0
+\int_{0}^{T}\e^{-\rho (T-\tau)}(T-\tau)^{\alpha-1} 
E_{\alpha,\alpha}\left(A(T-\tau)^{\alpha}\right)Bu(\tau)d\tau.
\end{equation}
The control sought is given in the form
\begin{equation} 
\label{controlform}	
u(\tau):=  -(T-\tau)^{\alpha-1}\e^{-\rho(T-\tau)}(B)^* 
E_{\alpha, \alpha}\left( A^*\left(T-\tau\right)^\alpha\right)  
\times W_c^{-1}\left[0, T\right] \e^{-\rho T}  
\times E_\alpha\left( T^\alpha A\right)y_0.
\end{equation}
By replacing \eqref{controlform} in \eqref{nstT}, 
\begin{equation*}
\begin{split}
y(T)
&=\e^{-\rho T} E_\alpha\left( AT^\alpha\right)y_0
+\int_{0}^{T}\e^{-\rho (T-\tau)}(T-\tau)^{\alpha-1} 
E_{\alpha,\alpha}\left(A(T-\tau)^{\alpha}\right)\\
&\qquad B \left\{-(B)^* E_{\alpha, \alpha}^*\left( 
A\left(T-\tau\right)^\alpha\right)  \times W_c^{-1}\left[0, T\right] 
\e^{-\rho T}  \times E_\alpha\left( T^\alpha A\right)y_0 \right\}d\tau\\
&=\e^{-\rho T} E_\alpha\left( AT^\alpha\right)y_0-W_c
\left[0, T\right]W_c^{-1}\left[0, T\right]\e^{-\rho T} 
E_\alpha\left( AT^\alpha\right)y_0\\
&=\e^{-\rho T} E_\alpha\left( AT^\alpha\right)y_0
-\e^{-\rho T} E_\alpha\left( AT^\alpha\right)y_0\\
&=0.
\end{split}
\end{equation*}
Conversely, let us assume that the system is controllable 
and show that the Gramian matrix is invertible. For the sake of argument, 
we suppose that the Gramian matrix is not invertible, that is, 
$\operatorname{det}(A)=0$. Then there exists a nonzero vector $x\in\mathbb{R}^n$ 
with $x^* W_{c}[0,T]x=0$. Thus,
\begin{align*}
&x^*\left\{\int_0^{T} \e^{-2\rho (T-\tau)}(T-\tau)^{2\alpha-2}  
E_{\alpha, \alpha}\left( (T-\tau)^\alpha A\right) B
B^* E_{\alpha, \alpha}^*\left( (T-\tau)^\alpha A\right) \mathrm{d}\tau \right\} x\\
&= \int_0^{T}x^* \e^{-2\rho (T-\tau)}(T-\tau)^{2\alpha-2}  
E_{\alpha, \alpha}\left( (T-\tau)^\alpha A\right) B
B^* E_{\alpha, \alpha}^*\left( (T-\tau)^\alpha A\right) x\mathrm{d}\tau\\
&=0,
\end{align*}
which implies that
\begin{align*}
\int_0^{T} \e^{-2\rho (T-\tau)}(T-\tau)^{2\alpha-2}\|y^*
E_{\alpha,\alpha}\left((T-\tau)^\alpha A\right) B\|^2d\tau=0.
\end{align*}
Since $(T-\tau)^{2\alpha-2}>0$ for all $\tau\in [0, T]$, 
then $\e^{-2\rho (T-\tau)}(T-\tau)^{2\alpha-2}>0$ and
$$
\|y^*E_{\alpha,\alpha}\left((T-\tau)^\alpha A\right)B\|^2=0.
$$
Therefore, 
\begin{equation}
\label{39}	
y^*E_{\alpha,\alpha}\left((T-\tau)^\alpha A\right)B=0.
\end{equation}
Let $y_0=E^{-1}_{\alpha}(T^\alpha A)x$. By the supposition 
of controllability, there exists a control $u$ with the property 
that it moves $y_0$ to the origin. Then, for $y_0=E^{-1}_{\alpha}(T^\alpha A)x$, 
formula \eqref{sol25fps} implies that
\begin{align*}
y(T)
&=\e^{-\rho T} E_\alpha\left( AT^\alpha\right)E^{-1}_{\alpha}(T^\alpha A)x
+\int_{0}^{T}\e^{-\rho (T-\tau)}(T-\tau)^{\alpha-1} 
E_{\alpha,\alpha}\left(A(T-\tau)^{\alpha}\right)Bu(\tau)d\tau\\
&= \e^{-\rho T}x+\int_{0}^{T}\e^{-\rho (T-\tau)}(T-\tau)^{\alpha-1} 
E_{\alpha,\alpha}\left(A(T-\tau)^{\alpha}\right)Bu(\tau)d\tau\\
&=0.
\end{align*}
By taking the scalar product of the equation above with \(x\), we obtain
\begin{align*}
x^*\e^{-\rho T}x+\int_{0}^{T}x^*\e^{-\rho (T-\tau)}(T-\tau)^{\alpha-1} 
E_{\alpha,\alpha}\left(A(T-\tau)^{\alpha}\right)Bu(\tau)d\tau=0.
\end{align*}
Thus,
\begin{align*}
\e^{-\rho T} x^*x+\int_{0}^{T}\e^{-\rho (T-\tau)}(T-\tau)^{\alpha-1}x^* 
E_{\alpha,\alpha}\left(A(T-\tau)^{\alpha}\right)Bu(\tau)d\tau=0.
\end{align*}
It follows from \eqref{39} that
\begin{align*}
\e^{-\rho T} \|x\|^2=0.
\end{align*}
This gives $x=0$, which is absurd with $x$ a nonzero vector. 
Hence, $W_c[0,T]$ is invertible.
\end{proof}

The following result presents a simple algebraic criterion 
to check the controllability of system~\eqref{tfs1}. 
It is an extension of the well-known
Kalman criterion for controllability.

\begin{thm}
\label{thm:K:c}
Under the hypotheses of Theorem~\ref{existh}, 
the tempered fractional system \eqref{tfs1} 
is controllable if, and only if, the Kalman matrix
$$
K=\left[B, A B, A^2 B, \ldots, A^{n-1} B\right]
$$ 
has full rank.
\end{thm}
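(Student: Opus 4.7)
The plan is to invoke Theorem~\ref{theo2} and reduce the claim to the equivalence between the non-singularity of the controllability Gramian $W_c[0,T]$ and the full-rank condition on the Kalman matrix $K$. The proof then splits into two implications, both handled by contrapositive, with the key technical ingredient being an extraction of power-series coefficients from a Mittag-Leffler expression.

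First, suppose $K$ does not have full rank. Then there exists a nonzero $x\in\mathbb{R}^n$ with $x^*A^kB=0$ for $k=0,1,\ldots,n-1$. By the Cayley--Hamilton theorem, every higher power $A^k$ with $k\geq n$ is a polynomial combination of $I,A,\ldots,A^{n-1}$, so $x^*A^kB=0$ actually holds for \emph{all} $k\geq 0$. Plugging this into the defining power series of the Mittag-Leffler function,
\begin{equation*}
x^*E_{\alpha,\alpha}\bigl((T-\tau)^\alpha A\bigr)B
=\sum_{k=0}^{\infty}\frac{(T-\tau)^{\alpha k}}{\Gamma(\alpha k+\alpha)}x^*A^kB=0,
\quad \tau\in[0,T],
\end{equation*}
and substituting this into the definition \eqref{GramCon} of $W_c[0,T]$ gives $x^*W_c[0,T]x=0$. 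Hence $W_c[0,T]$ is singular, and Theorem~\ref{theo2} shows the system is not controllable.

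Conversely, suppose the system is controllable, yet $K$ fails to have full rank. Then a nonzero $x\in\mathbb{R}^n$ satisfies $x^*K=0$. By Theorem~\ref{theo2}, $W_c[0,T]$ is non-singular, but mirroring the computation carried out in the proof of Theorem~\ref{theo2} (where the symmetric expansion $x^*W_c[0,T]x$ is rewritten as an $L^2$-norm of $B^*E_{\alpha,\alpha}^*((T-\tau)^\alpha A)x$ weighted by $e^{-2\rho(T-\tau)}(T-\tau)^{2\alpha-2}>0$), the assumption $x^*A^kB=0$ for all $k\geq 0$, obtained as above via Cayley--Hamilton, forces $x^*W_c[0,T]x=0$ and hence contradicts non-singularity. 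Equivalently, for a direct argument starting from Gramian singularity: if $W_c[0,T]$ were singular, the same weighted-integral identity would give $x^*E_{\alpha,\alpha}((T-\tau)^\alpha A)B=0$ on $[0,T]$; setting $u=(T-\tau)^\alpha$ recasts this as the vanishing on $[0,T^\alpha]$ of the entire power series $h(u)=\sum_{k\geq 0}\frac{u^k}{\Gamma(\alpha k+\alpha)}x^*A^kB$, so $h\equiv 0$ by the identity theorem and every $x^*A^kB$ vanishes, yielding $x^*K=0$.

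The main obstacle is the second direction and, more specifically, the extraction of each individual coefficient $x^*A^kB$ from the vanishing of a Mittag-Leffler expression on an interval. A naïve Taylor expansion at an endpoint is awkward because of the non-integer exponents $(T-\tau)^{\alpha k}$; the clean remedy is the change of variable $u=(T-\tau)^\alpha$, which turns the identity into the vanishing of an entire function of $u$ on an interval of positive length and lets the classical uniqueness-of-power-series principle finish the job. Once this step is in place, the rest is routine linear algebra together with an invocation of Theorem~\ref{theo2}.
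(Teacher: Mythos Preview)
Your argument is correct and, in fact, more complete than the paper's own proof. The paper proceeds directly from the variation-of-constants formula: expanding $E_{\alpha,\alpha}\bigl(A(T-\tau)^\alpha\bigr)$ termwise and invoking Cayley--Hamilton, it observes that every reachable vector $y(T)-e^{-\rho T}E_\alpha(AT^\alpha)y_0$ is a limit of linear combinations of the columns of $B,AB,\ldots,A^{n-1}B$, hence lies in $\mathcal{R}(K)$; controllability then forces $\mathcal{R}(K)=\mathbb{R}^n$. As written, this establishes only the \emph{necessity} of the rank condition---the paper's subsequent Remark says so explicitly---and no argument for sufficiency is supplied. Your Gramian-based route through Theorem~\ref{theo2} handles both directions: the coefficient-extraction step (Gramian singular $\Rightarrow$ $x^*E_{\alpha,\alpha}\bigl((T-\tau)^\alpha A\bigr)B\equiv 0$ on $[0,T]$ $\Rightarrow$ all $x^*A^kB=0$ via the substitution $u=(T-\tau)^\alpha$ and the identity theorem for power series) delivers precisely the sufficiency implication that the paper omits.

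One expository point: your ``Conversely'' paragraph opens by re-deriving the first implication (assuming controllability together with $\operatorname{rank}K<n$ and reaching a contradiction is just the contrapositive of what you already proved in the preceding paragraph). The genuine converse---full rank implies controllability---only enters at the sentence beginning ``Equivalently''. Restructuring so that the second paragraph starts from Gramian singularity and ends at $x^*K=0$ would make the two halves of the equivalence visibly symmetric.
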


\begin{proof}
We have
\begin{align*}
y(T)-\e^{-\rho T} E_\alpha\left( AT^\alpha\right)y_0
&=\int_{0}^{T}\e^{-\rho (T-\tau)}(T-\tau)^{\alpha-1} 
E_{\alpha,\alpha}\left(A(T-\tau)^{\alpha}\right)Bu(\tau)d\tau\\
&=\int_{0}^{T}\e^{-\rho (T-\tau)}(T-\tau)^{\alpha-1}
\sum_{k=0}^{\infty}\frac{\left[(T-\tau)^\alpha 
A\right]^k}{\Gamma(\alpha k+\alpha)}Bu(\tau) d\tau.
\end{align*}
By using the uniform convergence, it follows that
\begin{align*}
y(T)-\e^{-\rho T} E_\alpha\left( AT^\alpha\right)y_0
&=\sum_{k=0}^{\infty}\int_{0}^{T}\e^{-\rho (T-\tau)}(T-\tau)^{\alpha-1}
\frac{(T-\tau)^\alpha A^k}{\Gamma(\alpha k+\alpha)}Bu(\tau)d\tau\\
&=\sum_{k=0}^{\infty} A^kB\int_{0}^{T}\e^{-\rho (T-\tau)}(T-\tau)^{\alpha-1}
\frac{(T-\tau)^\alpha}{\Gamma(\alpha k+\alpha)}u(\tau)d\tau\\
&=\lim _{N \rightarrow \infty}\sum_{k=0}^{N} A^kB\int_{0}^{T}
\e^{-\rho (T-\tau)}(T-\tau)^{\alpha-1}
\frac{(T-\tau)^\alpha}{\Gamma(\alpha k+\alpha)}u(\tau)d\tau.
\end{align*}
In the preceding series, each term is expressed as a linear combination 
of the columns of $B$, $A B$, $A^2 B$, \ldots, $A^N B$. Each of these 
matrices can be represented as a linear combination of $B$, $A B$, $A^2 B$, 
\ldots, $A^{n-1} B$. Therefore, the vector
$$
\sum_{k=0}^{N} A^kB\int_{0}^{T}\e^{-\rho (T-\tau)}(T-\tau)^{\alpha-1}
\frac{(T-\tau)^\alpha}{\Gamma(\alpha k+\alpha)}u(\tau)d\tau
$$
is also a linear combination of the columns of 
$B$, $A B$, $A^2 B$, \ldots, $A^{n-1} B$, 
implying that it lies within the range space of the Kalman matrix $K$. 
Consequently, we obtain that $\mathcal{R}(K)=\mathbb{R}^n$.
\end{proof}

\begin{rem}
The condition $\mathcal{R}(K)=\mathbb{R}^n$ is necessary for the 
controllability of the linear fractional system \eqref{tfs1}: 
the Kalman matrix must have full rank, ensuring that we can only 
reach states within the range of the Kalman matrix.
\end{rem}


\subsection{Observability analysis}
\label{sub:sec:Obs:Anal}

Now, we shall study the observability of the tempered 
fractional linear system~\eqref{tfs1} given by
\begin{equation}
\label{system1}
\left\{
\begin{array}{ll}
{}^T\! D_{0}^{\alpha,\rho}y(t)
=Ay(t)+Bu(t), 
& t\in[0,T],\\
y(0)=y_{0},  & y_{0} \in \mathbb{R}^{n},
\end{array}
\right.
\end{equation}
and augmented with the output function 
\begin{equation}
\label{output}
z(t)=Cy(t)+Du(t),	
\end{equation}
where $z(t)\in\mathbb{R}^p$,  
$C\in \mathbb{R}^{p\times n}$ is the observation matrix,
and $C$ and $D \in \mathbb{R}^{m \times n}$ 
are assumed to be known constant matrices.

\begin{dd}
System~\eqref{system1}--\eqref{output} is said to be state observable 
on $\left[0, t_{f}\right]$, whenever any initial condition 
$y(0)=y_{0} \in \mathbb{R}^{n}$ is distinctively determined via 
the associated system input $u(t)$ and output $y(t)$, 
for any $t \in\left[0, t_{f}\right]$, $t_{f} \in[0, T]$.
\end{dd}

We now derive necessary and sufficient conditions 
for the observability of system~\eqref{system1}--\eqref{output}.

\begin{thm}
\label{Obs1}
System~\eqref{system1}--\eqref{output} is observable on 
$\left[0, t_{f}\right]$ if, and only if, the observability Gramian matrix
$$
W_{o}\left[0, t_{f}\right]:=  \int_{0}^{t_{f}} \e^{-\rho t}
E_{\alpha}^{*}\left(At^{\alpha}\right) C^{*}
C \e^{-\rho t}E_{\alpha}\left(At^{\alpha}\right) \mathrm{d}t
$$
is non-singular for some $t_{f}>0$.
\end{thm}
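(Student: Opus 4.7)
The plan is to reduce the observability problem to a statement about the injectivity of the map $y_0 \mapsto C\e^{-\rho t}E_\alpha(At^\alpha)y_0$ and then to relate this injectivity to the non-singularity of $W_o[0,t_f]$ via the standard Gramian argument.

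First, I would apply Theorem~\ref{existh} to the state equation \eqref{system1} and insert the resulting expression for $y(t)$ into the output \eqref{output} to obtain
\begin{equation*}
z(t) = C\e^{-\rho t}E_\alpha(At^\alpha) y_0
+ C\int_0^t \e^{-\rho(t-\tau)}(t-\tau)^{\alpha-1} E_{\alpha,\alpha}(A(t-\tau)^\alpha) B u(\tau)\,d\tau + Du(t).
\end{equation*}
Since $A$, $B$, $C$, $D$ and the input $u(\cdot)$ are known, the second and third terms on the right-hand side are known functions of $t$. Subtracting them from $z(t)$ yields a known quantity $\tilde z(t) = C\e^{-\rho t}E_\alpha(At^\alpha) y_0$. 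Thus, observability on $[0,t_f]$ is equivalent to the assertion that the map $y_0 \mapsto \tilde z(\cdot)$ is injective, which in turn is equivalent to the implication: $C\e^{-\rho t}E_\alpha(At^\alpha) y_0 = 0$ for all $t\in[0,t_f]$ forces $y_0 = 0$.

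For the sufficiency direction, assume $W_o[0,t_f]$ is non-singular, and suppose $C\e^{-\rho t}E_\alpha(At^\alpha) y_0 = 0$ identically on $[0,t_f]$. Multiplying on the left by $\e^{-\rho t} E_\alpha^*(At^\alpha) C^*$ and integrating over $[0,t_f]$ gives $W_o[0,t_f] y_0 = 0$, hence $y_0 = 0$. For the converse, I would argue contrapositively: if $W_o[0,t_f]$ is singular, pick a nonzero $y_0\in \ker W_o[0,t_f]$. Then
\begin{equation*}
0 = y_0^* W_o[0,t_f] y_0 = \int_0^{t_f} \bigl\| C\e^{-\rho t}E_\alpha(At^\alpha) y_0 \bigr\|^2\, dt,
\end{equation*}
so the integrand vanishes for almost every $t\in[0,t_f]$, and by continuity of $t\mapsto C\e^{-\rho t}E_\alpha(At^\alpha)y_0$ (which follows from the series representation of $E_\alpha$), it vanishes everywhere on $[0,t_f]$. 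Consequently, with zero input, the initial states $y_0$ and $0$ produce identical outputs on $[0,t_f]$, contradicting observability.

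The main obstacle I anticipate is essentially a bookkeeping one: making clear that the convolution term in $z(t)$ is entirely known once $u$ and the system matrices are fixed, so that observability genuinely reduces to the injectivity of the unforced output map. Once that reduction is stated cleanly, the Gramian argument is routine, but care should be taken to invoke continuity of $t\mapsto C\e^{-\rho t}E_\alpha(At^\alpha)y_0$ when passing from ``a.e.~zero'' to ``identically zero'', since the integrand of $W_o$ may vanish on a null set without offering an immediate contradiction otherwise.
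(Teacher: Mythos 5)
Your proposal is correct and follows essentially the same route as the paper: reduce observability to the injectivity of the unforced output map $y_0 \mapsto C\e^{-\rho t}E_\alpha(At^\alpha)y_0$ by subtracting the known input-dependent terms, then run the standard Gramian argument in both directions. The only (minor) differences are that the paper phrases sufficiency as an explicit reconstruction formula $y_0 = W_o^{-1}[0,t_f]\int_0^{t_f}\e^{-\rho t}E_\alpha^*(At^\alpha)C^* \bar y(t)\,dt$ rather than a pure injectivity statement, and your explicit continuity step for passing from ``a.e.\ zero'' to ``identically zero'' is a small refinement the paper leaves implicit.
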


\begin{proof}
The solution of system~\eqref{system1} is given by
\begin{equation}
\label{Sol}
y(t)=\e^{-\rho t} E_{\alpha}\left(At^{\alpha}\right) y_{0}
+\int_{0}^{t} \e^{-\rho(t-\tau)}(t-\tau)^{\alpha-1}
E_{\alpha, \alpha}\left(A(t-\tau)^{\alpha}\right) 
B u(\tau)  \mathrm{d} \tau.	
\end{equation}
Combining formula~\eqref{Sol} and the output~\eqref{output}, yields 
$$
y(t)=\e^{-\rho t} C E_{\alpha}\left(At^{\alpha}\right) y_{0} 
+\int_{0}^{t} \e^{-\rho(t-\tau)}(t-\tau)^{\alpha-1}
C E_{\alpha, \alpha}\left(A(t-\tau)^{\alpha}\right) 
B u(\tau) \mathrm{d} \tau +D u(t).
$$
Let us consider
$$
\bar{y}(t)= z(t)-\int_{0}^{t} \e^{-\rho(t-\tau)}(t-\tau)^{\alpha-1} 
C E_{\alpha, \alpha}\left(A(t-\tau)^{\alpha}\right) 
B u(\tau)  \mathrm{d} \tau -D u(t).
$$
Thus, one has
$$
\bar{y}(t)=\e^{-\rho t} C E_{\alpha}\left(At^{\alpha}\right) y_{0}.
$$
The observability of the system~\eqref{system1}--\eqref{output} 
is equivalent to the assessment of $y_{0}$ from $\bar{y}(t)$, which, 
in turn, since $\bar{y}(t)$ and $y_{0}$ are arbitrary, 
is equivalent to the evaluation of $y_{0}$ from $y(t)$, 
specified by
$$
y(t)=\e^{-\rho t} C E_{\alpha}\left(At^{\alpha}\right) y_{0},
$$
where $u(t)=0$. The observability Gramian matrix 
$W_{o}^{-1}\left[0, t_{f}\right]$ is well defined provided 
$W_{o}\left[0, t_{f}\right]$ is singular. Then, one gets 
$$
\begin{aligned}
W_{o}^{-1}\left[0, t_{f}\right] \int_{0}^{t_{f}}
&\e^{-\rho t} E_{\alpha}^{*}\left(At^{\alpha}\right) C^{*} y(t) \mathrm{d} t\\
&=W_{o}^{-1}\left[0, t_{f}\right] \int_{0}^{t_{f}} 
\e^{-\rho t}E_{\alpha}^{*}\left(At^{\alpha}\right) C^{*} 
\e^{-\rho t} C E_{\alpha}\left(At^{\alpha}\right) y_{0} \mathrm{d} t\\
&=  W_{o}^{-1}\left[0, t_{f}\right] \int_{0}^{t_{f}} 
\e^{-\rho t} E_{\alpha}^{*}\left(At^{\alpha}\right)C^{*} 
C\e^{-\rho t} E_{\alpha}\left(At^{\alpha}\right) y_{0} \mathrm{d} t\\
&=  W_{o}^{-1}\left[0, t_{f}\right] W_{o}\left[0, t_{f}\right] y_{0}\\
&=y_{0}
\end{aligned}
$$
for arbitrary $y(t)$ and $t_{f}>0$. Therefore,
\begin{equation}
\label{key1}
W_{o}^{-1}\left[0, t_{f}\right] \int_{0}^{t_{f}} 
\e^{-\rho t}E_{\alpha}^{*}\left(At^{\alpha}\right)
C^{*} y(t) \mathrm{d} t=y_{0}.	
\end{equation}
One has that the left side of \eqref{key1} is based on $y(t) \in\left[0, t_{f}\right]$, 
and \eqref{key1} is a linear algebraic equation of $y_{0}$. Using the fact that 
$W_{o}^{-1}\left[0, t_{f}\right]$ exists, it follows that the initial condition 
$y(0)=y_{0}$ is distinctively determined by means of the associated system output 
$y(t)$ for $t \in\left[0, t_{f}\right]$. On the other hand, if $W_{o}\left[0, t_{f}\right]$ 
is singular for some $t_{f}>0$, then there exists a vector $y_{\alpha} \neq 0$ fulfilling
$$
y_{\alpha}^{*} W_{o}\left[0, t_{f}\right] y_{\alpha}=0.
$$
Taking $y_{\alpha}=y_{0}$, one has
$$
y_{\alpha}^{*} \int_{0}^{t_{f}} E_{\alpha}^{*}\left(At^{\alpha}\right) C^{*} 
\e^{-\rho t} \e^{-\rho t} C E_{\alpha}
\left(At^{\alpha}\right)y_{\alpha} \mathrm{d} t=0.
$$
This implies that
$$
\int_{0}^{t_{f}}\|y(t)\|^{2} \mathrm{d} t=0.
$$
Hence,
$$
y(t)=\e^{-\rho t} C E_{\alpha}\left(At^{\alpha}\right) y_{0}
$$
and, since the system is observable, it implies that $y_{0}= 0$, 
which is a contradiction. We conclude that 
$W_{o}\left[0, t_{f}\right]$ is non-singular.
\end{proof} 

We end Section~\ref{sub:sec:Obs:Anal} 
by proving the Kalman condition for the 
observability of system~\eqref{system1}--\eqref{output}.

\begin{thm}
\label{theo4}
System~\eqref{system1}--\eqref{output} is observable 
on $\left[0, t_{f}\right]$ if, and only if,
$$
rank~Q_{o}=rank~\left(
\begin{array}{c}
C \\
C A \\
\vdots \\
C A^{n-1}
\end{array}
\right)=n.
$$	
\end{thm}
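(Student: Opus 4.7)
The plan is to reduce the theorem to Theorem~\ref{Obs1}, which already identifies observability on $[0, t_{f}]$ with non-singularity of the Gramian $W_{o}[0, t_{f}]$. It then suffices to prove that $W_{o}[0, t_{f}]$ is non-singular if and only if $Q_{o}$ has rank $n$. Both directions are handled by contraposition, with the series expansion of $E_{\alpha}(At^{\alpha})$ and the Cayley--Hamilton theorem providing the bridge between the integral object $W_{o}$ and the algebraic object $Q_{o}$.

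First, assume $\operatorname{rank} Q_{o} < n$. Pick a nonzero $y_{0} \in \mathbb{R}^{n}$ with $Q_{o} y_{0} = 0$, so that $C A^{k} y_{0} = 0$ for $k = 0, 1, \dots, n-1$. By Cayley--Hamilton, every $A^{k}$ with $k \geq n$ is a polynomial in $I, A, \dots, A^{n-1}$, whence $C A^{k} y_{0} = 0$ for all $k \geq 0$. Substituting into the Mittag-Leffler series defining $E_{\alpha}(At^{\alpha})$ gives $C E_{\alpha}(At^{\alpha}) y_{0} \equiv 0$ for all $t \geq 0$, and therefore
$$
y_{0}^{*}\, W_{o}[0, t_{f}]\, y_{0} = \int_{0}^{t_{f}} \e^{-2\rho t} \|C E_{\alpha}(At^{\alpha}) y_{0}\|^{2}\, dt = 0,
$$
so $W_{o}[0, t_{f}]$ is singular and the system fails to be observable.

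Conversely, assume the system is not observable. Theorem~\ref{Obs1} yields a nonzero $y_{0}$ with $y_{0}^{*} W_{o}[0, t_{f}] y_{0} = 0$. Since the integrand $\e^{-2\rho t}\|C E_{\alpha}(At^{\alpha}) y_{0}\|^{2}$ is continuous and non-negative, it must vanish identically on $[0, t_{f}]$, so $C E_{\alpha}(At^{\alpha}) y_{0} = 0$ for every $t \in [0, t_{f}]$. Setting $s = t^{\alpha}$, the map $s \mapsto \sum_{k \geq 0} \bigl(C A^{k} y_{0}\bigr)\, s^{k}/\Gamma(k\alpha+1)$ is an entire function of $s$ that vanishes on the non-degenerate interval $[0, t_{f}^{\alpha}]$; by the identity theorem all its Taylor coefficients vanish, i.e.\ $C A^{k} y_{0} = 0$ for every $k \geq 0$. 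The first $n$ of these identities read $Q_{o} y_{0} = 0$, contradicting $\operatorname{rank} Q_{o} = n$.

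The step I expect to be the most delicate is passing from $C E_{\alpha}(At^{\alpha}) y_{0} \equiv 0$ to the vanishing of the matrix products $C A^{k} y_{0}$, because the series is analytic in $t^{\alpha}$ rather than in $t$ itself, so the identity theorem cannot be invoked directly in the variable $t$. The substitution $s = t^{\alpha}$, which maps $[0, t_{f}]$ bijectively onto $[0, t_{f}^{\alpha}]$, restores genuine analyticity in $s$ and lets us apply the identity theorem for entire functions, which is what ultimately ties the Gramian condition to the algebraic rank condition on $Q_{o}$.
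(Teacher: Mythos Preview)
Your proof is correct and rests on the same two ingredients as the paper's argument---the Mittag--Leffler series expansion and the Cayley--Hamilton theorem---but the organisation differs. The paper uses Cayley--Hamilton to factor
\[
C\,E_{\alpha}(At^{\alpha})=\bigl(\lambda_{0}(t),\lambda_{1}(t),\dots,\lambda_{n-1}(t)\bigr)\,Q_{o},
\]
and then argues directly that the map $y_{0}\mapsto C\,E_{\alpha}(At^{\alpha})y_{0}$ is injective if and only if $Q_{o}$ has full column rank. You instead reduce to Theorem~\ref{Obs1}, handle the two implications separately, and in the converse direction replace the factorisation by an analyticity argument: the substitution $s=t^{\alpha}$ followed by the identity theorem for entire functions forces $CA^{k}y_{0}=0$ for all $k$. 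Your route is slightly longer but more explicit; in particular, the paper's factorisation step tacitly relies on the linear independence of the scalar functions $\lambda_{0}(t),\dots,\lambda_{n-1}(t)$ to pass from $C\,E_{\alpha}(At^{\alpha})y_{0}\equiv 0$ to $Q_{o}y_{0}=0$, whereas your identity-theorem argument makes that passage rigorous without any such side claim.
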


\begin{proof}
From the proof of Theorem~\ref{Obs1}, one has
$$
y(t)=\e^{-\rho t} C E_{\alpha}\left(At^{\alpha}\right) y_{0},
$$
where $y_{0}$ is uniquely determined by $y(t)$ if, and only if, 
$C E_{\alpha}\left(At^{\alpha}\right)$ is non-singular. 
Using the Cayley--Hamilton theorem, we obtain that
$$
\begin{aligned}
C E_{\alpha}\left(At^{\alpha}\right)
&=C \sum_{j=0}^{\infty}\frac{t^{\alpha j}}{\Gamma(\alpha j+j)} A^{j}\\
&=C \sum_{j=0}^{n-1} \lambda_{j}t A^{j}\\
&=\sum_{j=0}^{n-1} \lambda_{j}t C A^{j},
\end{aligned}
$$
where $\lambda_{j}$ is a polynomial in $t-a$.
The above formula may be written in matrix form as follows:
$$
C E_{\alpha}\left(At^{\alpha}\right)
=\left(\lambda_{0}t, \lambda_{1}t, \ldots,\lambda_{n-1}t\right)
\left(\begin{array}{c}
C \\
C A \\
\vdots \\
C A^{n-1}
\end{array}\right).
$$
Therefore, $C E_{\alpha}\left(At^{\alpha}\right)$ is non-singular if, 
and only if, rank 
$\left(\begin{array}{c}C \\ C A \\ \vdots \\ C A^{n-1}\end{array}\right)=n$. 
Then, we conclude that system~\eqref{system1}--\eqref{output} is observable 
on $\left[0, t_{f}\right]$ if, and only if, rank $\mathrm{Q}_{o}=n$.
\end{proof}


\section{Applications}
\label{Sec:4}

In this section, we present two examples 
to illustrate our main theoretical results.


\subsection{Example 1: The Fractional-Order Chua's Circuit}
\label{subsec:Chua:Circuit}

Chua's circuit, commonly referred as Chua's oscillator, 
is a basic electronic circuit that demonstrates nonlinear dynamic behaviors, 
such as bifurcation and chaos. This circuit consists of several components, 
including resistors, capacitors, and operational amplifiers. The 
fractional-order Chua's system with incommensurate parameters 
and external disturbance, as described in \cite{Petras}, is defined as follows:
\begin{equation}
\label{Circuitapp}	
\begin{cases}
\displaystyle{ }^T D_0^{\alpha, \rho} y_{1}(t)
=\delta\left(y_{2}(t)-y_{1}(t)-m_1 y_{1}(t)-\frac{1}{2}\left(m_0-m_1\right) 
\times(|y_{1}(t)+1|-|y_{1}(t)-1|)\right), \\
\displaystyle{ }^T D_0^{\alpha, \rho} y_{2}(t)=y_{1}(t)-y_{2}(t)+y_{3}(t), \\
\displaystyle{ }^T D_0^{\alpha, \rho} y_{3}(t)=-\beta y_{2}(t)-\gamma y_{3}(t)+u(t),\\
y_{1}(0)=y_{10},\quad y_{2}(0)=y_{20},\quad y_{3}(0)=y_{30},
\end{cases}
\end{equation}
where $y_{3}(t)$ is the current through the inductance, and $y_{1}(t)$ 
and $y_{2}(t)$ are the voltages across the capacitors. 
In \cite{BukhshY}, when the tempered derivative in \eqref{Circuitapp} 
is replaced with the generalized Caputo proportional fractional-order derivative, 
to consider the system as a linear system, the authors evaluated 
$\frac{1}{2}\left(m_0-m_1\right) \times(|y_{1}(t)+1|-|y_{1}(t)-1|)$ 
for $y_{1}(t)\geq 1$, $-1<y_{1}(t)<1$ and $y_{1}(t)\leq-1$. However, 
this approach might appear unusual, as the term $\frac{1}{2}\left(m_0-m_1\right) 
\times(|y_{1}(t)+1|-|y_{1}(t)-1|)$ is a component of the state solution, 
factored into the determination of the solution, and is no longer a constant 
to be compared with the fixed values $-1$ and $1$. To remove any doubt, 
we linearize the system (as is customary) around the origin, yielding the following system:
\begin{equation}
\label{linCircuitapp}	
\begin{cases}
\displaystyle{ }^T D_0^{\alpha, \rho} y_{1}(t)=\delta\left(y_{2}(t)-y_{1}(t)-m_0 y_{1}(t)\right), \\
\displaystyle{ }^T D_0^{\alpha, \rho} y_{2}(t)=y_{1}(t)-y_{2}(t)+y_{3}(t), \\
\displaystyle{ }^T D_0^{\alpha, \rho} y_{3}(t)=-\beta y_{2}(t)-\gamma y_{3}(t)+u(t),\\
y_{1}(0)=y_{10},\quad y_{2}(0)=y_{20},\quad y_{3}(0)=y_{30}.
\end{cases}
\end{equation}
The linearized system~\eqref{linCircuitapp} 
can be expressed in the abstract form of \eqref{tfs1} as
\begin{equation}
\label{linCircuitapp2}
\begin{cases}
{ }^T D_0^{\alpha, \rho}y(t)=Ay(t)+Bu(t),\\
y(t)=y_0,
\end{cases}
\end{equation}
where
$$
y(t)=
\begin{pmatrix}
y_{1}(t)\\
y_{2}(t)\\
y_{3}(t)
\end{pmatrix},
\quad y_{0}=
\begin{pmatrix}
y_{10}\\
y_{20}\\
y_{30}
\end{pmatrix},
\quad A=
\begin{pmatrix}
-\delta(1+m_0) & \delta & 0\\
1 &-1 & 1\\
0 & -\beta &-\gamma
\end{pmatrix}\: \text{and}\:\: 
B=
\begin{pmatrix}
0\\
0\\
1
\end{pmatrix}.
$$
The controllable matrix has the form
\begin{equation}
\begin{bmatrix}
0 & 0 & \delta \\
0 & 1 & -1-\gamma \\
1 & -\gamma & -\beta+\gamma^2
\end{bmatrix}.
\end{equation}
Moreover, a simple calculation shows that $\det \left[B, A B, A^2 B\right]=\delta\neq 0$, 
which implies that $rank\left[B, A B, A^2 B\right]=3$. Therefore, from Theorem~\ref{theo2}, 
the tempered fractional Chua's circuit is controllable.

Now, we give some numerical simulations for the controllability 
of system~\eqref{linCircuitapp} over the interval $\left[0, 1.5\right]$ 
in the case where $\rho=0.5$ and $ \alpha=0.7$, and by taking the parameters 
$\delta=2$, $\beta=0.5$, $\gamma=-1$, and $m_{0}=3$.
According to system~\eqref{linCircuitapp2}, one has 
$$
A=\begin{pmatrix}
-8 & 2 & 0\\
1 &-1 & 1\\
0 & -0.5 &1
\end{pmatrix}.
$$
By virtue of the controllability Gramian matrix formula~\eqref{GramCon}, we have 
\begin{equation}
\begin{aligned}
W_c\left[0, 1.5\right]= \int_0^{1.5} \e^{-(1.5-\tau)}(1.5-\tau)^{-0.6}  
E_{0.7, 0.7}\left( (1.5-\tau)^{0.7} A\right) 
B B^* E_{\alpha, \alpha}\left( (1.5-\tau)^{0.7} A^*\right) \mathrm{d}\tau.
\end{aligned}
\end{equation}
Using the Mittag-Lefller function of two parameters~\eqref{eq2}, we obtain that
$$
W_c\left[0, 1.5\right]
=
\begin{pmatrix}
17.4120 & 9.7422 & 10.0512\\
9.7422 & 2.8135 & 24.3021\\
10.0512 & 24.3021 & 25.3369\\
\end{pmatrix},
$$
which is non-singular for $T=1.5$. Then, from Theorem~\ref{theo2}, we conclude 
that system~\eqref{linCircuitapp} with $\rho=0.5$ is controllable on $[0 ~1.5]$.
We shall use the control $u(t)$ defined by formula~\eqref{controlform} as follows:
\begin{multline}
\label{Control1}
u(t) = -(1.5 - t)^{-0.3}\e^{-0.5(1.5 - t)}(B)^* E_{0.7, 0.7}
\left( A^*\left(1.5 - t\right)^{0.7}\right)\\
\times W_c^{-1}\left[0, 1.5\right] \e^{-0.75} 
\times E_{0.7}\left( 1.5^{0.7} A\right)y_0.
\end{multline}
This control can steer the system~\eqref{linCircuitapp} from the initial state 
$y(0) = [2; 5; 3]$ to the desired state $y(1.5) = [1; -3.5; 3.5]$.

Figure~\ref{Figure1} presents the trajectories of system~\eqref{linCircuitapp} 
without the control $u(t)$. We observe that there is no trajectory connecting 
the initial state $[2; 5; 3]$ to the final desired state $[1; -3.5; 3.5]$. In 
Figure~\ref{Figure2}, we examine the behavior of system~\eqref{linCircuitapp} 
using the steering control $u(t)$. It shows that the state of system~\eqref{linCircuitapp} 
moves from its initial state to the desired final state $[1; -3.5; 3.5]$ over the interval 
$[0,~1.5]$. The evolution of the steering control function $u(t)$ 
is illustrated in Figure~\ref{Figure3}.
\begin{figure}[ht!]
\begin{center}
\includegraphics[scale=0.8]{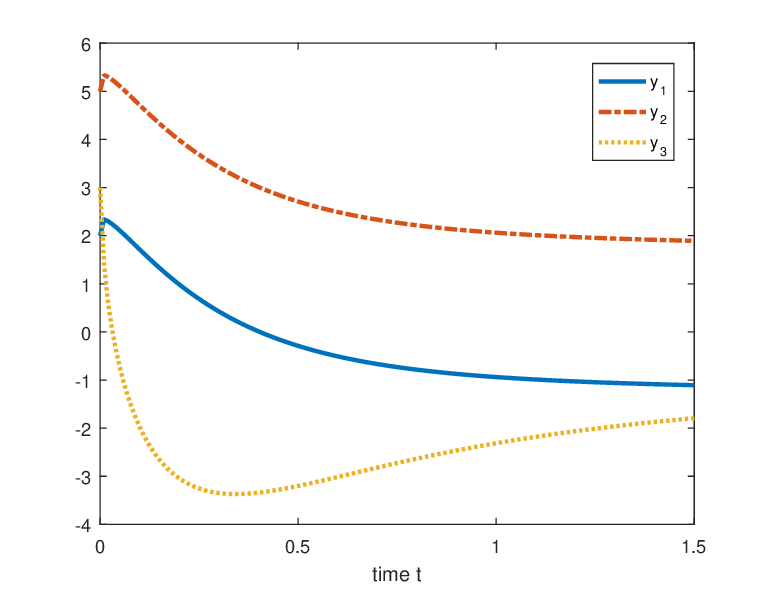}
\caption{The system~\eqref{linCircuitapp} evolution on the interval $[0 ~1.5]$ 
with $\rho=0.5$ and $u(t)=0$.}\label{Figure1}
\end{center}
\end{figure}
\begin{figure}[ht!]
\begin{center}
\includegraphics[scale=0.8]{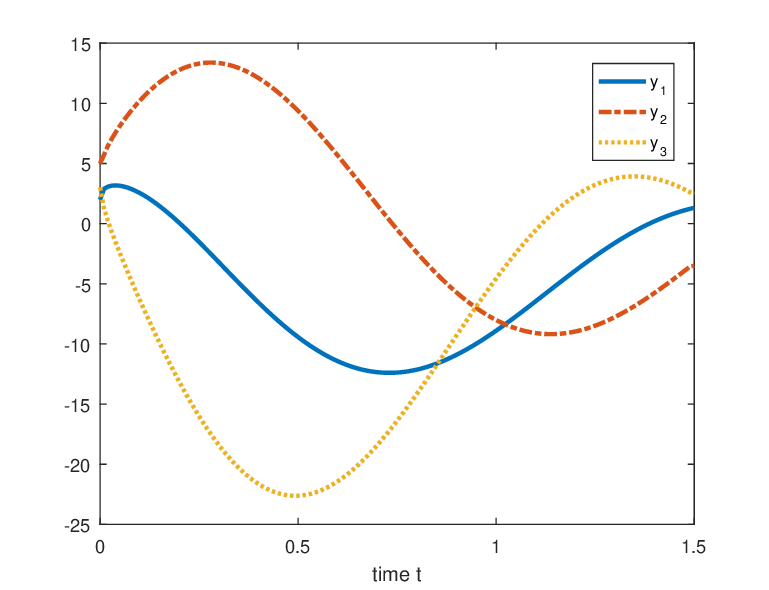}
\caption{The evolution of the controlled system~\eqref{linCircuitapp} 
with $\rho=0.5$ on the interval $[0~ 1.5]$.}\label{Figure2}
\end{center}
\end{figure}
\begin{figure}[ht!]
\begin{center}
\includegraphics[scale=0.8]{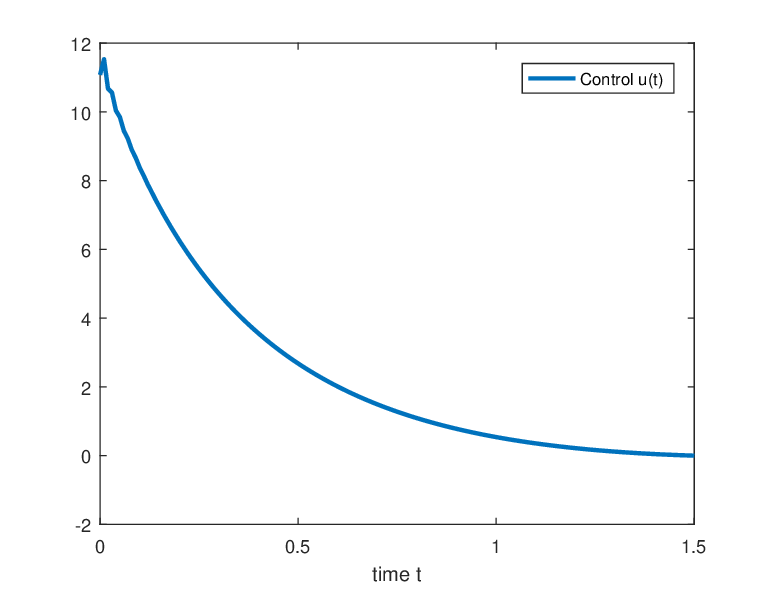}
\caption{The steering control~\eqref{Control1} 
on $[0~ 1.5]$ in the case $\rho=0.5$.}\label{Figure3}
\end{center}
\end{figure}

Now, we present numerical simulations for the controllability 
of system~\eqref{linCircuitapp} with $\rho = 1$ on the interval 
$\left[0, 1.5\right]$. We again take the parameter values 
$\delta = 2$, $\beta = 0.5$, $\gamma = -1$, and $m_{0} = 3$.

From the controllability Gramian matrix formula~\eqref{GramCon}, we have
\begin{equation*}
\begin{aligned}
W_c\left[0, 1.5\right]=\int_0^{1.5} \e^{-2 (1.5-\tau)}(1.5-\tau)^{0.6}  
E_{0.7, 0.7}\left( (1.5-\tau)^{0.7} A\right) 
B B^* E_{0.7, 0.7}\left( (1.5-\tau)^{0.7} A^*\right) \mathrm{d}\tau.
\end{aligned}
\end{equation*}	
Using the Mittag-Lefller function of two parameters~\eqref{eq2} 
and performing some numerical calculations, we get
$$
W_c\left[0, 1.5\right]
=
\begin{pmatrix}
18.2342 & 10.2913 & 10.4490\\
10.2913& 3.7710 & 24.1524 \\
10.4490 & 24.1524  & 27.5196\\
\end{pmatrix},
$$
which is non-singular for $T=1.5$. Then, by virtue of Theorem~\ref{theo2}, 
we deduce that system~\eqref{linCircuitapp} with $\rho=1$ is controllable 
on $[0 ~1.5]$. We use the control $u(t)$ defined by formula~\eqref{controlform} 
as follows:
\begin{equation}
\label{Control2}
u(t)=  -(1.5-t)^{-0.3}\e^{-1.5+t}(B)^* E_{0.7, 0.7}\left( A^*\left(1.5-t\right)^{0.7}\right)  
\times W_c^{-1}\left[0, 1.5\right] \e^{-T}  \times E_{0.7}\left( 1.5^{0.7} A\right)y_0
\end{equation}
that allows us to steer the system~\eqref{linCircuitapp} from
the	initial state  $y(0)=[4;2;-3]$ 
to the final state $y(1.5)=[5; 9; 15]$.

In Figure~\ref{Figure4}, we examine the solution of system~\eqref{linCircuitapp} 
without the control $u(t)$. We observe that there is no trajectory connecting 
the initial state $[4; 2; -3]$ and the final state $[5; 9; 15]$. 
In Figure~\ref{Figure5}, we illustrate the behavior of system~\eqref{linCircuitapp} 
using the steering control $u(t)$. It shows that the state of system~\eqref{linCircuitapp} 
moves from its initial state to the final state $[5; 9; 15]$ over the interval $[0,~1.5]$. 
The steering control function $u(t)$ is shown in Figure~\ref{Figure6}. 
	
\begin{figure}[ht!]
\begin{center}
\includegraphics[scale=0.8]{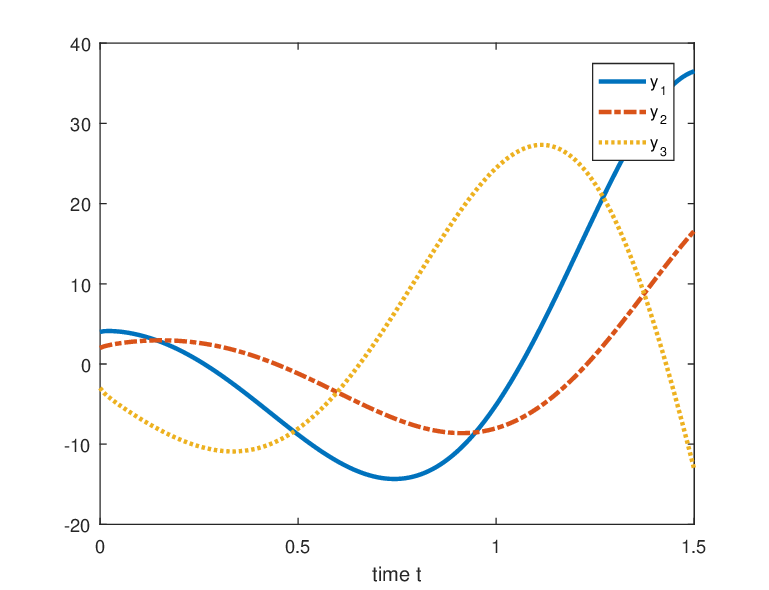}
\caption{The system~\eqref{linCircuitapp} evolution 
on the interval $[0 ~1.5]$ with $\rho=1$ 
and $u(t)=0$.}\label{Figure4}
\end{center}
\end{figure}
\begin{figure}[ht!]
\begin{center}
\includegraphics[scale=0.8]{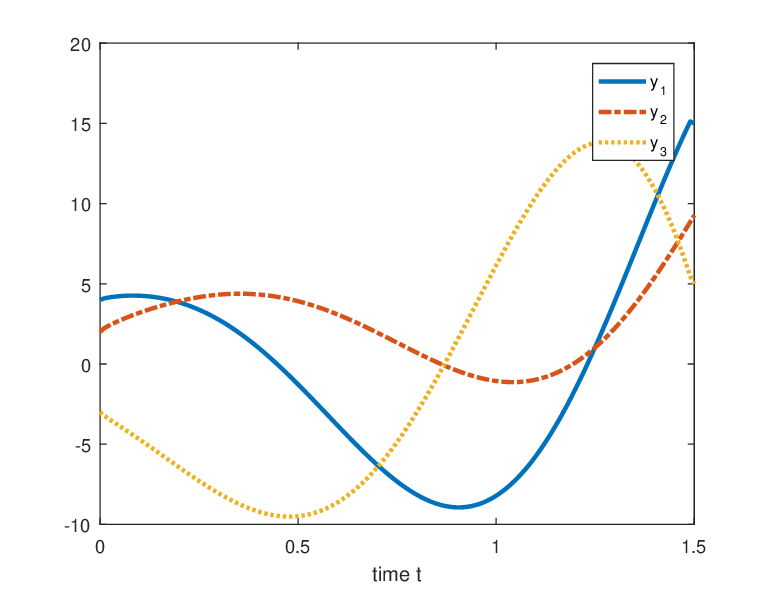}
\caption{The evolution of the controlled 
system~\eqref{linCircuitapp} with $\rho=1$ on the interval 
$[0~ 1.5]$.}\label{Figure5}
\end{center}
\end{figure}
\begin{figure}[ht!]
\begin{center}
\includegraphics[scale=0.8]{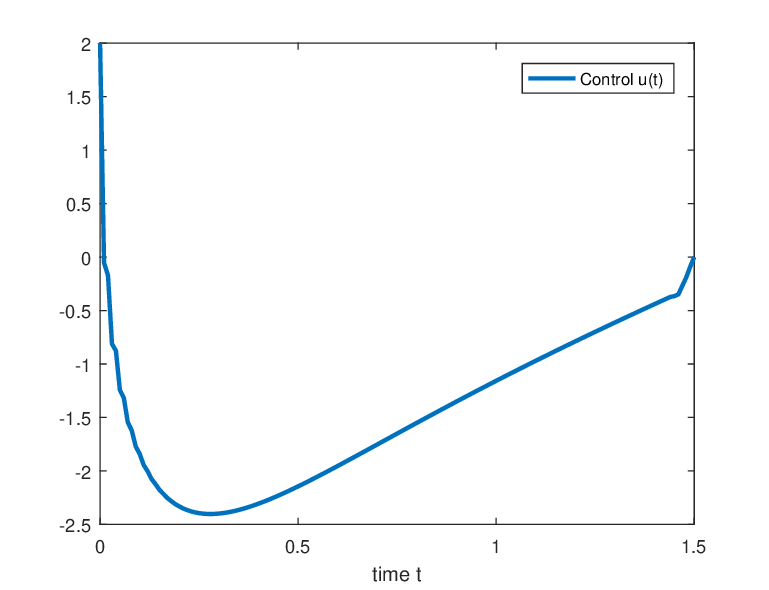}
\caption{The steering control~\eqref{Control2} 
on $[0~ 1.5]$ in the case  $\rho=1$.}\label{Figure6}
\end{center}
\end{figure}


\subsection{Example 2: Chua--Hartley's oscillator}
\label{subsec:C:H:oscil}

The Chua--Hartley system differs from the conventional Chua system 
in such a way that it substitutes the piecewise-linear nonlinearity 
with an appropriate cubic nonlinearity, resulting in a very similar behavior. 
Here, we replace the integer derivatives on the left side of the  Chua--Hartley 
differential system~\cite{Hartley} by tempered fractional derivatives, as follows:
\begin{equation}
\label{oscia2}	
\begin{cases}
{ }_0 { }^T D_0^{\alpha, \rho} y_{1}(t)
=\displaystyle\delta\left(y_{2}(t)+\frac{y_{1}(t)-2 y_{1}^3(t)}{7}\right),\\
{ }^T D_0^{\alpha, \rho} y_{2}(t)=\displaystyle y_{1}(t)-y_{2}(t)+y_{3}(t)+u(t),\\
{ }^T D_0^{\alpha, \rho} y_{3}(t)=-\beta y_{2}(t)=\displaystyle-\frac{100}{7} y_{2}(t).
\end{cases}
\end{equation}
Also, we consider that the system is augmented with the output functions
\begin{equation}
\begin{cases}
z_1(t)=y_{1}(t)+y_{3}(t),\\
z_2(t)=2y_{1}(t)-y_{2}(t).
\end{cases}\label{augobs}
\end{equation}
Linearizing the system~\eqref{oscia2} around the origin, 
one obtains that
\begin{equation}
\begin{cases}
{ }_0 { }^T D_0^{\alpha, \rho} y_{1}(t)
=\displaystyle\delta y_{2}(t)+\frac{\delta}{7}y_{1}(t), \\
{ }^T D_0^{\alpha, \rho} y_{2}(t)=\displaystyle y_{1}(t)-y_{2}(t)+y_{3}(t)+u(t), \\
{ }^T D_0^{\alpha, \rho} y_{3}(t)=\displaystyle-\frac{100}{7} y_{2}(t).
\end{cases}\label{oscia2l}
\end{equation}
According to \eqref{system1}--\eqref{output}, the linearized 
system~\eqref{oscia2l}--\eqref{augobs} can be expressed 
in the following abstract form:
\begin{equation}
\begin{cases}
{ }^T D_0^{\alpha, \rho}y(t)=Ay(t)+Bu(t),\\
y(t)=y_0,
\end{cases}
\end{equation}
and
\begin{equation}
z(t)=Cy(t), 
\end{equation}
where
$$
y(t)
=
\begin{pmatrix}
y_{1}(t)\\
y_{2}(t)\\
y_{3}(t)
\end{pmatrix},\ 
A=
\begin{pmatrix}
\displaystyle\frac{\delta}{7} & \delta & 0\\
1 &-1 & 1\\
0 & -\frac{100}{7} &0
\end{pmatrix},\: \: 
B=\begin{pmatrix}
0\\
1\\
0
\end{pmatrix}, 
z(t)
=
\begin{pmatrix}
z_1(t)\\
z_2(t)
\end{pmatrix}\
\text{and}\ 
C
=
\begin{pmatrix}
1 &0 & 1\\
2& -1& 0
\end{pmatrix}. 
$$
Simple calculations show that
\begin{align*}
rank~Q_{o}
&=rank~\left(\begin{array}{c}
C \\
C A \\
C A^{2}
\end{array}\right)\\
&=rank~\begin{pmatrix}
1  & 0& 1\\
2 & -1 & 0\\
\displaystyle\frac{\delta}{7} & \delta-\frac{100}{7}& 0\\
\displaystyle\frac{2\delta}{7}-1 &2\delta+1 & -1\\
\displaystyle \frac{2\delta^2}{49}+2\delta-\frac{100}{7}
&\displaystyle \frac{2\delta^2}{7}-2\delta+\frac{100}{7}
&\displaystyle 2\delta-\frac{100}{7}\\
\displaystyle \frac{2\delta^2}{49}+\frac{10\delta}{7}+1
&\displaystyle \frac{2\delta^2}{7}-3\delta+\frac{93}{7}
&\displaystyle 2\delta+1
\end{pmatrix}\\
=3.
\end{align*}
Therefore, from Theorem~\ref{theo4}, the tempered 
linearized Chua--Hartley oscillator is observable.


\section{Conclusion}
\label{Sec:5}

In this paper, we studied the well-posedness, controllability, and observability 
of tempered fractional differential systems. The main contribution is the proof 
of necessary and sufficient Gramian matrix criteria and rank conditions 
for both the controllability and observability of linear tempered fractional 
differential systems. To illustrate the theoretical results, we provided examples 
using fractional Chua's circuit and Chua--Hartley oscillator models. 

Controllability results for nonlinear fractional-order dynamical systems
can be found in \cite{balachandran2,balachandran1}. In the future, 
we plan to investigate the controllability and observability problems
for nonlinear tempered fractional differential systems. 


\section*{Funding}

Zitane and Torres were supported by The Center for Research and
Development in Mathematics and Applications (CIDMA)
through the Portuguese Foundation for Science and Technology 
(FCT), projects UIDB/04106/2020 (\url{https://doi.org/10.54499/UIDB/04106/2020})
and UIDP/04106/2020 (\url{https://doi.org/10.54499/UIDP/04106/2020}).


\section*{Acknowledgements}

The authors are grateful to constructive and helpful editorial 
and reviewers comments, that helped them to substantially 
improve the first submitted version.



\end{document}